\title{definitions}
\newtheorem{lemma}{\textit{Lemma}}
\newtheorem{theorem}{Theorem}
\newtheorem{definition}{Definition}
\newtheorem{remark}{Remark}
\newtheorem{corollary}{Corollary}
\begin{document}
\title{\vspace{-2cm}Two extremal problems in the light of Lex graphs}
\author{Kristina Dedndreaj\thanks{This research has been funded by the European Social Fund (ESF). SAB PRANO Nr: 1562608885160.} }
\affil{University of Applied Sciences
Mittweida}

\date{\today}
\maketitle

\begin{abstract}
  Extremal problems involving independent sets are much studied. Two of the most important extremal problems in this context are concerned with the sharp upper bounds for the number of independent sets of fixed size and the independence number. In literature, these sharp upper bounds are derived in completely different contexts. In this paper, we show that both of these sharp upper bounds can be derived by considering Lex graphs. More exactly, they depend on two parameters of a sequence defined on them.
\end{abstract}

\section{Introduction}
By complementation, counting independent sets is equivalent to clique counting. The study of the latter was greatly influenced by the discovery of the  Tur\'{a}n's theorem \cite{turan1941external}. For different aspects of the study of the clique counting the reader is referred to \cite{erdos1962number}, \cite{bollobas1976complete}, \cite{moon1968independent}, \cite{lovasz1983number}, \cite{moon1965cliques}, \cite{wood2007maximum} and  \cite[ch.~VI]{bollobas2004extremal}.

We will focus on the sharp upper bounds for the number of independent sets of fixed size and the independence number of $G(n,m)$ graphs, that is, graphs of order $n$ and size $m$.
 Hanani  and Erd\"{o}s \cite{erdos1962number} proved that the maximal number of complete subgraphs of order $r$ contained in a $G(n,m)$ graph depends only on $m$.

\begin{theorem}[\cite{erdos1962number}]\label{erdos}
    Let $G(m)$ be a graph on $m$ edges and let $r\geq 3$ be a natural number. Let $s,t$ be two natural numbers such that $m=\binom{s}{2}+t$ with $0<t\leq s$. Then
    \begin{equation}
        c_r\leq \binom{s}{r} + \binom{t}{r-1}.
    \end{equation}
    Where $c_r$ denotes the number of cliques of order $r$ contained in $G(m)$.
\end{theorem}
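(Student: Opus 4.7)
The plan is to apply the Kruskal--Katona theorem to the family $\mathcal{K}_r := \{K \subseteq V(G) : G[K] \cong K_r\}$ of $r$-cliques, viewed as an $r$-uniform set system on $V(G)$. The key structural observation is that every $2$-subset of a member of $\mathcal{K}_r$ is an edge of $G$, so the iterated lower shadow satisfies $\partial^{\,r-2}\mathcal{K}_r \subseteq E(G)$, whence $m \ge |\partial^{\,r-2}\mathcal{K}_r|$. Matching this Kruskal--Katona lower bound against the cascade decomposition $m = \binom{s}{2}+t$ will give the desired upper bound on $c_r$.

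First, I would verify that the claimed value is attained by the Lex graph $L_m$, which in this range is $K_s$ together with one new vertex joined to $t$ of its vertices. The $r$-cliques of $L_m$ split into those contained in $K_s$, contributing $\binom{s}{r}$, and those containing the extra vertex together with $r-1$ of its neighbours, contributing $\binom{t}{r-1}$. Hence $c_r(L_m) = \binom{s}{r}+\binom{t}{r-1}$ is realised, showing the inequality is sharp and isolating the Lex graph as the extremal candidate.

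For extremality, suppose towards contradiction that $c_r \ge \binom{s}{r}+\binom{t}{r-1}+1$. Expanding this value in its unique cascade (colex) representation and applying Kruskal--Katona $r-2$ times, the $2$-shadow of $\mathcal{K}_r$ must contain at least $\binom{s}{2}+t+1$ pairs, contradicting $m \ge |\partial^{\,r-2}\mathcal{K}_r|$. The technical core of the proof is this shadow-monotonicity comparison between the cascade of $c_r$ and that of $m$: the step is transparent when $c_r$ itself already has a clean two-term cascade form with $t \ge r-1$, but needs separate bookkeeping when $t < r-1$ or when the cascade of $c_r$ has more than two terms.

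If that cascade comparison turns out to be awkward within the Lex-graph framework, the fallback plan is Erd\H{o}s's original inductive argument on $r$: select a vertex $v$ of minimum degree $d$, use $c_r(G) \le c_{r-1}(G[N(v)]) + c_r(G-v)$, and invoke the inductive hypothesis on the two subgraphs, together with a convexity argument that aligns $d$ with the leading cascade term $s$. The main obstacle in either approach is the same, namely the delicate case analysis when the parameters $s,t,d,r$ are close to each other and cascades degenerate; this is where I would expect to spend most of the effort.
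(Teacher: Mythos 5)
Your proposal is correct, but it takes a genuinely different route from the paper. The paper never argues about cliques directly: it proves the complementary statement (Theorem \ref{indbound}) by taking the extremality of the lex graph for independent-set counts \cite{cutler2011extremal} as a black box, computing $i_r(L(n,m))$ explicitly from the structure of $L(n,m)$ in terms of the parameters $k$ and $p_k$ of the $sds(m,n)$ decomposition (Theorem \ref{fixedcount}, Corollary \ref{corollary}), and then translating $(k,p_k)$ into $(s,t)$ via Lemma \ref{lemma} and the two closing observations. You instead apply Kruskal--Katona directly to the $r$-clique family $\mathcal{K}_r$, using $\partial^{\,r-2}\mathcal{K}_r\subseteq E(G)$ and a cascade comparison, with sharpness witnessed by $K_s$ plus a vertex joined to $t$ of its vertices (this graph is the \emph{colex} rather than the lex graph --- your construction is right, only the name is off). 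The two arguments share the same engine at the root, since the lex-graph extremality the paper cites is itself a Kruskal--Katona-type result; but your version is self-contained modulo that theorem, works without complementation, and lands on the $(s,t)$ form directly, whereas the paper's framework buys a uniform treatment in which the fixed-size bound, the independence-number bound of Hansen--Zheng, and Theorem \ref{sproperty} all fall out of the same two parameters $k,p_k$. For what it is worth, the bookkeeping you flag does close cleanly: if $c_r\geq\binom{s}{r}+\binom{t}{r-1}+1$ and $t\geq r-1$, the next set in colex order is $\{1,\dots,r-2,t+1,s+1\}$, which contributes the new pair $\{t+1,s+1\}$ to the iterated shadow, forcing $\left|\partial^{\,r-2}\mathcal{K}_r\right|\geq\binom{s}{2}+t+1>m$; and if $t<r-1$, then $c_r\geq\binom{s}{r}+1$ already forces $r-1\geq t+1$ new pairs incident to $s+1$, giving the same contradiction.
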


\begin{remark}\label{svalue}
    For given $m$ the numbers $s$ and $t$ in Theorem \ref{erdos} are unique. This implies that $s$ is the smallest positive integer which satisfies
    
    \begin{equation}\label{secondin}
        m\leq \dfrac{s(s+1)}{2}.
    \end{equation}
    Therefore we have
    \begin{equation}\label{sol}
        s=\left\lceil\sqrt{\frac{1}{4}+2m}-\frac{1}{2}\right\rceil
    \end{equation}
    from where the value of $t$ is evident.
\end{remark}

\begin{lemma}\label{lemma}
Let $x$ be a real number and let $\{x\}=x-\lfloor x\rfloor$ denote its fractional part. The following holds
\begin{equation}
    \left\lceil x-\frac{1}{2}\right\rceil=
    \begin{cases}
     \left\lfloor x+\dfrac{1}{2}\right\rfloor-1 &\text{if } \{x\}=\dfrac{1}{2},\\[+4mm]
    \left\lfloor x+\dfrac{1}{2}\right\rfloor &\text{otherwise}.
    \end{cases}
\end{equation}
\end{lemma}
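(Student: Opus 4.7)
The plan is to reduce the identity to a direct case analysis on the fractional part of $x$. Write $x = n + f$ with $n = \lfloor x\rfloor \in \mathbb{Z}$ and $f = \{x\} \in [0,1)$. Since $\lceil x - 1/2\rceil - n$ and $\lfloor x + 1/2\rfloor - n$ depend only on $f$, it suffices to compute both quantities as a function of $f$ and check that their difference behaves as stated.

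For the left side, $x - 1/2 = (n-1) + (f + 1/2)$ when $f < 1/2$ and $x - 1/2 = n + (f - 1/2)$ when $f \geq 1/2$. This gives
\[
\bigl\lceil x - \tfrac{1}{2}\bigr\rceil =
\begin{cases}
 n & \text{if } f \in [0, 1/2],\\
 n+1 & \text{if } f \in (1/2, 1).
\end{cases}
\]
(The endpoint $f = 1/2$ is included in the first branch because $x - 1/2 = n$ is already an integer.) For the right side, $x + 1/2 = n + (f + 1/2)$ when $f < 1/2$ and $x + 1/2 = (n+1) + (f - 1/2)$ when $f \geq 1/2$, yielding
\[
\bigl\lfloor x + \tfrac{1}{2}\bigr\rfloor =
\begin{cases}
 n & \text{if } f \in [0, 1/2),\\
 n+1 & \text{if } f \in [1/2, 1).
\end{cases}
\]

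Comparing the two piecewise formulas, the values agree on $[0,1/2) \cup (1/2, 1)$ and differ by exactly $1$ at $f = 1/2$, which is precisely the condition $\{x\} = 1/2$. This reproduces the two cases in the statement.

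No step is genuinely difficult; the only point requiring care is keeping track of which side of each threshold ($f = 0$ and $f = 1/2$) the endpoints belong to, since $\lceil \cdot \rceil$ and $\lfloor \cdot \rfloor$ behave asymmetrically on integers. Presenting the computation as two piecewise formulas makes the discrepancy at $f = 1/2$ transparent and avoids the need for four separate subcases.
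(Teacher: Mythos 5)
Your proof is correct: writing $x = n + f$ with $n = \lfloor x\rfloor$ and $f = \{x\}$, your piecewise evaluations of $\left\lceil x-\tfrac{1}{2}\right\rceil$ and $\left\lfloor x+\tfrac{1}{2}\right\rfloor$ are accurate, including the careful handling of the endpoints $f=0$ and $f=\tfrac{1}{2}$, and comparing the two formulas yields exactly the stated dichotomy. Note that the paper states this lemma without giving any proof at all, so your case analysis on the fractional part is not merely consistent with the paper's approach --- it supplies the missing argument, and it is the natural one.
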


Note that when we want to find the maximal number of independent sets of size $r$ by using Theorem \ref{erdos} then we need to find the maximal number of cliques of size $r$ in its complement. Thus, the maximal number of independent sets of fixed size depends on the order and the size of the graph, unlike the maximal number of cliques of fixed size. We re-write Theorem \ref{erdos} in terms of independent sets.

\begin{theorem}\label{indbound}
    Let $G(n,m)$ be a graph on $n$ vertices and $m$ edges and let $r\geq 3$ be a natural number. Let $s,t$ be two natural numbers such that $\binom{n}{2}-m=\binom{s}{2}+t$ with $0<t\leq s$. Then
    \begin{equation}\label{upperbound}
        i_r\leq \binom{s}{r}+ \binom{t}{r-1}.
    \end{equation}
    Where $i_r$ denotes the number of independent sets of order $r$ contained in $G(n,m)$.
\end{theorem}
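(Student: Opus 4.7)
The plan is to deduce this directly from Theorem \ref{erdos} by passing to the complement graph, as the remark immediately preceding the statement already suggests. First I would recall that an independent set in $G$ is the same thing as a clique in the complement $\overline{G}$, so every $r$-independent set of $G$ is an $r$-clique of $\overline{G}$, and conversely. In particular, if $i_r(G)$ denotes the number of independent sets of order $r$ in $G$ and $c_r(\overline{G})$ the number of cliques of order $r$ in $\overline{G}$, then $i_r(G)=c_r(\overline{G})$.

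Next I would count edges: since $G$ has $n$ vertices and $m$ edges, the complement $\overline{G}$ has exactly $\binom{n}{2}-m$ edges. The hypothesis of the theorem is precisely that this number decomposes as $\binom{s}{2}+t$ with $0<t\leq s$, which is the decomposition required to invoke Theorem \ref{erdos} on $\overline{G}$.

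Then I would simply apply Theorem \ref{erdos} to $\overline{G}$ with its $\binom{n}{2}-m$ edges: this yields
\begin{equation*}
    c_r(\overline{G})\leq \binom{s}{r}+\binom{t}{r-1}.
\end{equation*}
Combining with $i_r(G)=c_r(\overline{G})$ gives the desired inequality \eqref{upperbound}.

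There is essentially no obstacle here; the only thing worth verifying is that Theorem \ref{erdos}, stated for a graph ``on $m$ edges'' without constraining the number of vertices, applies to $\overline{G}$ without any extra assumption, which is the case. The whole content of the present reformulation is notational: the parameters $s,t$ are now read off from $\binom{n}{2}-m$ rather than from $m$, and accordingly the bound depends on both $n$ and $m$ rather than on $m$ alone, as emphasised in the paragraph preceding the statement.
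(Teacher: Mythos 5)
Your proof is correct: complementation turns independent sets of $G$ into cliques of $\overline{G}$, the complement has $\binom{n}{2}-m$ edges, and Theorem \ref{erdos} applied to $\overline{G}$ gives \eqref{upperbound} at once. But this is not the route the paper takes: its stated aim is to re-derive the bound from lex graphs rather than from the Hanani--Erd\H{o}s theorem. The paper counts the independent sets of size $r$ in $L(n,m)$ directly (Theorem \ref{fixedcount}), obtaining $\binom{n-k-p_k}{r-1}+\binom{n-k}{r}$ in terms of the depth $k$ and the last term $p_k$ of $sds(m,n)$; it then invokes the extremality of lex graphs (they maximize the number of independent sets of each fixed size among $G(n,m)$ graphs, by Cutler--Radcliffe) to get Corollary \ref{corollary}; finally it translates the parameters $(k,p_k)$ into $(s,t)$ via the observations at the end of the paper together with Lemma \ref{lemma}: in the generic case $s=n-k$ and $t=n-k-p_k$, while in the degenerate case $n-k-p_k=0$ one has $s=t=n-k-1$ and Pascal's rule $\binom{n-k}{r}=\binom{n-k-1}{r}+\binom{n-k-1}{r-1}$ recovers the stated form. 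What each approach buys: yours is shorter and purely notational, but it leans entirely on Theorem \ref{erdos}, an external result whose proof is a separate combinatorial argument; the paper's derivation is independent of Theorem \ref{erdos} and extracts, from the same two parameters $k$ and $p_k$, both this bound and the sharp independence-number bound (Theorem \ref{sproperty} and the Hansen--Zheng result), which is precisely the unification the paper is after. Both arguments are valid proofs of the statement as written.
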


By putting $\binom{n}{2}-m$ instead of $m$ in Equation \eqref{sol} the value of $s$ in Theorem \ref{indbound} is

\begin{equation}
 \left\lceil -\frac{1}{2} +\sqrt{\frac{1}{4}+n^2-n-2m}\right\rceil.
\end{equation}

For notation convenience, in the following Theorem we will let ${\alpha}_u$ denote the sharp upper bound for the independence number of $G(n,m)$ graphs. As we will see later, the number $s$ has the following property.
\begin{theorem}\label{sproperty}
    For a given $G(n,m)$ graph let the numbers $s,t$ be as in Theorem \ref{indbound}. The followig holds
    \begin{equation}
        s=
        \begin{cases}
        {\alpha}_u-1  &\text{if } \binom{n}{2}-m=\binom{s}{2}+s,\\
        {\alpha}_u &\text{otherwise }.
        \end{cases}
    \end{equation}
\end{theorem}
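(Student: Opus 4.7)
The plan is to establish matching upper and lower bounds on $\alpha_u$: the upper bound will come directly from Theorem \ref{indbound}, and the lower bound from an explicit extremal $G(n,m)$ graph in each case.

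For the upper bound, I apply Theorem \ref{indbound} with a carefully chosen $r$. If $t=s$, take $r=s+2$ (which is $\geq 3$): the right-hand side of \eqref{upperbound} becomes $\binom{s}{s+2}+\binom{s}{s+1}=0$, so $i_{s+2}=0$ and hence $\alpha_u\leq s+1$. If $t<s$ (which forces $s\geq 2$, so $r=s+1\geq 3$), the bound $\binom{s}{s+1}+\binom{t}{s}=0$ gives $i_{s+1}=0$, hence $\alpha_u\leq s$.

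For the matching lower bound I pass to the complement, a graph on $n$ vertices with $\binom{s}{2}+t$ edges, and construct one whose clique number equals the target. In the case $t=s$, let the complement be the vertex-disjoint union of a $K_{s+1}$ and $n-s-1$ isolated vertices: it has exactly $\binom{s+1}{2}=\binom{n}{2}-m$ edges and clique number $s+1$. In the case $t<s$, let the complement be a $K_s$ on vertices $v_1,\dots,v_s$ together with a new vertex $v_{s+1}$ joined to exactly $t$ of the $v_i$ (the remaining $n-s-1$ vertices being isolated): it has $\binom{s}{2}+t$ edges and clique number exactly $s$, because any clique through $v_{s+1}$ has at most $t+1\leq s$ vertices.

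The main point requiring care is feasibility: both constructions use $s+1$ non-isolated vertices, so we need $n\geq s+1$. This follows from $\binom{s}{2}+t=\binom{n}{2}-m\leq\binom{n}{2}$ together with $t\geq 1$, which forces $\binom{s}{2}<\binom{n}{2}$ and hence $s\leq n-1$. The obstacle I anticipate is ruling out the existence of a larger clique in the second construction; this is precisely why the extra $t$ edges are arranged as a star from a single new vertex rather than, for example, being attached so as to create a triangle extending the $K_s$.
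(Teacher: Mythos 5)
Your proof is correct, and it takes a genuinely different route from the paper's. The paper never sandwiches $\alpha_u$ between bounds: it computes it in closed form. By Theorem \ref{bounds} and the extremality of lex graphs, $\alpha_u=n-k$ where $k$ is the depth of $L(n,m)$; then $s$ and $\alpha_u$ are respectively the ceiling and floor expressions $s=\left\lceil x-\frac{1}{2}\right\rceil$ (Remark \ref{svalue}) and $\alpha_u=n-k=\left\lfloor x+\frac{1}{2}\right\rfloor$ (Remark \ref{solution}) of the same quantity $x=\sqrt{\frac{1}{4}+n^2-n-2m}$, and Lemma \ref{lemma} yields $s=\alpha_u-1$ exactly when $\{x\}=\frac{1}{2}$, which the paper observes is equivalent to $\binom{n}{2}-m=\binom{s}{2}+s$. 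You instead obtain the upper bound by applying Theorem \ref{indbound} with $r$ chosen one above the target so that the right-hand side vanishes ($r=s+2$ when $t=s$, $r=s+1$ when $t<s$; your checks that $r\geq 3$ and that $i_r=0$ forces $\alpha\leq r-1$ are exactly what is needed), and the matching lower bound from explicit complements ($K_{s+1}$, respectively $K_s$ plus a vertex of degree $t$, padded with isolated vertices), including the feasibility check $n\geq s+1$. Your route buys elementarity: given the classical Erd\"{o}s--Hanani bound it needs no lex-graph machinery and no floor/ceiling arithmetic, and it exhibits extremal graphs explicitly. The paper's route buys uniformity and a closed formula for $\alpha_u$: both Theorem \ref{indbound} and Theorem \ref{sproperty} fall out of the single lex-graph computation (Corollary \ref{corollary}). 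Two remarks: your lower-bound witnesses are precisely the complements of lex graphs, so your extremal examples secretly coincide with the paper's; and since this paper re-proves Theorem \ref{indbound} itself via lex graphs, it would be cleaner to cite the external Theorem \ref{erdos} (applied to the complement) for your upper bound --- there is no actual circularity either way, because Theorem \ref{erdos} is a prior result independent of Theorem \ref{sproperty}, but that citation makes the logical independence transparent.
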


It seems that this interesting fact was not proved nor noticed by Hanani  and Erd\"{o}s in \cite{erdos1962number} where Theorem \ref{erdos} makes its first appearance. However, as we will show later, it can be understood and explained in the context of Lex graphs. In the next section, we will prove Theorem \ref{indbound} and Theorem \ref{sproperty} by using Lex graphs.

\section{Lex graphs}
\begin{definition}[\cite{cutler2011extremal}]
    Given $A,B\subset \mathbf{N}$ we say $A$ precedes $B$ in lexicographic (or lex) ordering, written $A{<}_L B$, if $\min\{A\triangle B\}\in A$.
\end{definition}

The lex graph, denoted $L(n,m)$, is the graph with vertex set $[n]=\{1,\ldots,n\}$ and edge set the first $m$ elements of $\binom{[n]}{2}$ under the lex ordering. The first few elements of the lex order on $\binom{[n]}{2}$ are
\begin{equation*}
\{1, 2\} , \{1, 3\} , \{1, 4\} ,\ldots, \{1, n\} , \{2, 3\} , \{2, 4\} ,\ldots , \{2, n\} , \{3, 4\} ,\ldots
\end{equation*}

 \begin{figure}[H]
    \centering
     \begin{tikzpicture}[
            V/.style = {% V as Vortex
                        circle,thick,fill=white},scale=0.4]
            %draw-for the border
            %drawing nodes
            \node[V] (1) at (6,4) [scale=0.5,draw] {1};
            \node[V] (2) at (1,4) [scale=0.5,draw] {2};
            \node[V] (3) at (3,2) [scale=0.5,draw] {3};
            \node[V] (4) at (3,6) [scale=0.5,draw] {4};
            \node[V] (5) at (9,4) [scale=0.5,draw] {5};
            %drawing edges
            \draw[black,very thick]
            (1) to (2)
            (1) to (3)
            (1) to (4)
            (1) to (5)
            (2) to (3)
            (2) to (4);
    \end{tikzpicture}
    \caption{$L(5,6)$}
    \label{L(5,6)}
    \end{figure}

The class of the lex graphs is interesting because they are extremal for the number of the independent sets. The lex graph $L(n,m)$  maximizes both, the total number independent sets and the number of independent sets of fixed size for $G(n,m)$ graphs \cite{cutler2011extremal}.

\begin{lemma}\label{exp1lemma}\cite{dedndreaj2021activities}
    Let $m,n$ be two natural numbers such that $1\leq m\leq \binom{n}{2} $. Then there exists a unique sequence ${\{p_i\}_{i=1}^k}$ such that
\begin{equation}\label{expansion1}
    \sum \limits_{i=1}^k p_i=m
\end{equation}
where:
    \begin{enumerate}
        \item $p_i>0$ and $1 \leq k \leq n-1$,
        \item $p_i=n-i$ for $i<k$,
        \item If $p_k\neq n-k$ then $p_k\in\{1,\ldots,n-k-1\}$.
    \end{enumerate}
\end{lemma}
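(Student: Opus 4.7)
The natural approach is an explicit greedy construction for existence, followed by a length-based comparison argument for uniqueness.

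For \textbf{existence}, I would define $k$ to be the smallest positive integer with $m \leq \sum_{i=1}^k (n-i)$, which equals $\binom{n}{2}-\binom{n-k}{2}$. This $k$ is well-defined and satisfies $1\leq k\leq n-1$: the lower bound comes from $m\geq 1$, and the upper bound from $m\leq\binom{n}{2}=\sum_{i=1}^{n-1}(n-i)$. I then set $p_i = n-i$ for $i<k$ and define
\begin{equation*}
    p_k \;=\; m - \sum_{i=1}^{k-1}(n-i).
\end{equation*}
Minimality of $k$ forces $\sum_{i=1}^{k-1}(n-i)<m$, so $p_k\geq 1$; the defining inequality for $k$ gives $p_k\leq n-k$. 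Hence either $p_k=n-k$, or $p_k\in\{1,\ldots,n-k-1\}$, which is precisely condition~(3). Conditions~(1) and~(2) are immediate from the construction.

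For \textbf{uniqueness}, suppose $\{p_i\}_{i=1}^{k}$ and $\{q_j\}_{j=1}^{\ell}$ both satisfy the three conditions and both sum to $m$. If $k=\ell$, then condition~(2) forces $p_i=q_i=n-i$ for all $i<k$, and matching the sum forces $p_k=q_k$. If instead $k<\ell$, I would bound the two sums from opposite sides: the first sequence satisfies
\begin{equation*}
    \sum_{i=1}^k p_i \;\leq\; \sum_{i=1}^{k-1}(n-i)+(n-k) \;=\; \sum_{i=1}^{k}(n-i),
\end{equation*}
while the second, using $q_j=n-j$ for $j<\ell$ and $q_\ell\geq 1$, satisfies
\begin{equation*}
    \sum_{j=1}^{\ell} q_j \;\geq\; \sum_{j=1}^{\ell-1}(n-j)+1 \;\geq\; \sum_{j=1}^{k}(n-j)+1,
\end{equation*}
since $\ell-1\geq k$ and every omitted term $n-j$ with $k<j\leq\ell-1$ is positive (as $j\leq\ell-1\leq n-2$). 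This contradicts equality of the two sums, ruling out $k\neq\ell$.

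The proof is almost mechanical once the greedy $k$ is pinned down; the only point requiring care is the \emph{boundary} in condition~(3), which permits $p_k$ to equal $n-k$ even though the greedy construction could in principle have produced length $k-1$ instead. The minimality definition of $k$ (as the \emph{smallest} index with $\sum_{i=1}^k(n-i)\geq m$) is what prevents that ambiguity, and I expect this is the only subtlety in the argument; the rest is bookkeeping of partial sums.
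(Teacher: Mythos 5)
Your proof is correct and complete. It is not, however, the paper's argument: the paper disposes of Lemma \ref{exp1lemma} with the single sentence that ``the proof can be done by induction on $m$'' (increment $m$ by one, and either increase $p_k$ by one or, once $p_k$ has reached $n-k$, open a new term $p_{k+1}=1$), leaving both the details and the uniqueness claim to the reader. Your route is a direct, non-inductive construction: you pin down $k$ as the smallest index with $m \leq \sum_{i=1}^{k}(n-i)$, set $p_k = m - \sum_{i=1}^{k-1}(n-i)$, and then prove uniqueness by a two-sided comparison of partial sums, showing that any admissible sequence of length $\ell > k$ must sum to at least $\sum_{i=1}^{k}(n-i)+1 > m$. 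This buys two things the paper's sketch does not. First, your minimality characterization of $k$ is exactly the one the paper later asserts in Remark \ref{solution} (the smallest $k$ with $m \leq k(2n-k-1)/2$), so the closed-form expression for the depth falls out of your existence proof at no extra cost, whereas under the inductive route it requires a separate observation. Second, uniqueness is made explicit, and your argument correctly isolates where condition (3) is needed: the bound $p_k \leq n-k$ (covering both the case $p_k=n-k$ and the case $p_k \leq n-k-1$) is what caps the shorter sequence's sum, while positivity and integrality give the longer sequence's sum a strictly larger lower bound. One cosmetic remark: you treat only $k<\ell$ and then conclude ``ruling out $k \neq \ell$''; this is fine since the argument is symmetric in the two sequences, but it is worth saying so in one clause.
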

\begin{proof}
The proof can be done by induction on $m$.
\end{proof}

\begin{definition}\label{sds}\cite{dedndreaj2021activities}
    Let $m,n$ be two natural numbers such that $1\leq m\leq \binom{n}{2}$. Expansion \eqref{expansion1} satisfying conditions (1)-(3) in Lemma \eqref{exp1lemma} is called subsequent decreasing summation decomposition of $m$ with base $n$ or shortly $sds(m,n)$. The number $k$ is called depth of the summation \eqref{expansion1} or the depth of $sds(m,n)$.
\end{definition}

\begin{remark}\label{solution}
From Definition \ref{sds} we have that the depth $k$ of $sds(m,n)$ is the smallest number in the set $\{1,\ldots,n-1\}$ which satisfies

\begin{equation}\label{rightineq}
    m\leq \frac{k(2n-k-1)}{2}.
\end{equation}

 Therefore the value for $k$ is

\begin{equation}\label{secondsolution}
    \left\lceil n - \frac{1}{2} -\sqrt{\frac{1}{4}+n^2-n-2m}\right\rceil.
\end{equation}

The value for $p_k$  is 
 \begin{equation}
        m-\sum\limits_{i=1}^{k-1}n-i
    =m-\frac{(k-1)(2n-k)}{2}.
\end{equation}

\end{remark}

%\begin{remark}\label{pksolution}
    %From the definition  in Lemma \ref{expansion1} we have
    %\begin{equation}
       % p_k= m-\sum\limits_{i=1}^{k-1}n-i
        %=m-\frac{(k-1)(2n-k)}{2}.
    %\end{equation}
     %By substituting the value which we found for $k$ in Remark \ref{solution} we have
     %\begin{equation}
         %p_k=m-\frac{(f(n,m)-1)(2n-f(n,m))}{2}=g(n,m).
     %\end{equation}

%\end{remark}

Let $L(n,m)$ be a lex graph on $n$ vertices and $m$ edges such that $m\geq 1$. The depth of $sds(m,n)$ is called the depth of the lex graph $L(n,m)$. Let $N(i)$ denote the open neighborhood of the vertex $i$. For $1\leq i\leq n$ let $A(i)=\{v\in [n]: v<i\}$ and $B=\{k+1,\ldots,k+p_k\}$ then by virtue of the definition of $L(n,m)$ we have 

$$
N(i)=
\begin{cases}
V\setminus\{i\} & \text{ for } i\in\{1,\ldots,k-1\},\\
A(i)\cup B &\text{ for } i=k,\\
A(k+1)     &\text{ for } i\in \{k+1,\ldots,k+p_k\},\\
A(k) &\text{ for } i\in\{k+p_k+1,\ldots,n\}.
\end{cases}
$$

For given $m,n$ where  $1\leq m\leq \binom{n}{2}$, the sequence ${\{p_i\}_{i=1}^k}$, the numbers $p_k$ and $k$ as in Remark \ref{solution} will be crucial for the derivation of the sharp upper bounds for the number of independent sets of fixed size and the independence number for graphs of order $n$ and size $m$.

\begin{theorem}\label{bounds}
    Let $L(n,m)$ be the lex graph with $n$ vertices, $m$ edges and depth $k$. Then the maximum independent sets of $L(n,m)$ have size $n-k$.
\end{theorem}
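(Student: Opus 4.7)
The plan is to read the structure of $L(n,m)$ directly off the neighborhood description given just before the theorem, and then identify the maximum independent set by a short case analysis.

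First I would observe that the vertices $1,\ldots,k-1$ are universal: for every such $i$ we are told $N(i)=V\setminus\{i\}$. Hence any independent set that contains a vertex from $\{1,\ldots,k-1\}$ must be a single vertex. Since the theorem is about maximum size, and we will exhibit an independent set of size $n-k\geq 1$, we may restrict attention to independent sets entirely contained in $\{k,k+1,\ldots,n\}$.

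Next I would read off the induced subgraph $H$ on $\{k,k+1,\ldots,n\}$. From the case $i=k$ the only neighbors of $k$ lying in this set are $B=\{k+1,\ldots,k+p_k\}$. From the case $i\in\{k+1,\ldots,k+p_k\}$ we have $N(i)\cap\{k,\ldots,n\}=\{k\}$, and from the case $i\in\{k+p_k+1,\ldots,n\}$ we have $N(i)\cap\{k,\ldots,n\}=\emptyset$. Therefore $H$ is a star centered at $k$ with leaves $B$, together with $n-k-p_k$ isolated vertices.

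With this structure in hand the maximum independent set of $H$ is immediate, and I would split into two cases. If the set avoids the center $k$, then all remaining vertices $\{k+1,\ldots,n\}$ are pairwise non-adjacent in $H$ and form an independent set of size $n-k$. If the set contains $k$, then it must avoid every leaf in $B$, so it is contained in $\{k\}\cup\{k+p_k+1,\ldots,n\}$, giving size at most $1+(n-k-p_k)=n-k-(p_k-1)\leq n-k$, with equality only when $p_k=1$. In both cases the maximum size is $n-k$, attained by $\{k+1,\ldots,n\}$, proving the theorem.

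There is essentially no obstacle: the whole argument is a direct consequence of the explicit description of $N(i)$ already set up in the paper. The only thing to be careful about is boundary behaviour, namely that $k\leq n-1$ (so $n-k\geq 1$ and the claimed maximum is nontrivial) and that $p_k\geq 1$ (so that option (ii) cannot beat option (i)); both are part of the definition of $sds(m,n)$ in Lemma \ref{exp1lemma}.
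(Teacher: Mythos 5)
Your proof is correct and takes essentially the same approach as the paper: the paper's sets $B=\{k\}\cup\{k+p_k+1,\ldots,n\}$ and $C=\{k+1,\ldots,n\}$ are precisely your two cases (independent sets containing $k$ versus avoiding $k$), and both arguments rest on the universal vertices $1,\ldots,k-1$ being excluded from any independent set of size greater than one. If anything, your star-plus-isolated-vertices reading of the induced subgraph on $\{k,\ldots,n\}$ is marginally cleaner, since the paper additionally proves that $B$ and $C$ are dominating (hence maximal), a fact the size comparison does not require.
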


\begin{proof}
     Let $A=\{1,\ldots,k-1\}$, $B=\{k,p_k+1,\ldots,n\}$ and $C=\{k+1,\ldots,n\}$. Notice that $A\cup B\cup C=V$ and $B^{\complement}\cap C^{\complement} \subset A$. Also, notice that no element of $A$ can belong to an independent set with cardinality greater than one because for such elements we have $N(i)=V\setminus\{i\}$. We claim that every independent set with cardinality greater than one is a subset of $B$ or a subset of $C$. Assume the opposite, that there exists an independent set $I$ with cardinality greater than one which is not a subset of $B$ nor a subset of $C$. This means that
        \begin{equation}
            I\cap B^{\complement}\neq \emptyset \text{ and } I\cap C^{\complement}\neq \emptyset,
            \end{equation}
        therefore, $I\cap A\neq \emptyset$. As a consequence, there exists an element of $A$ which belongs to an independent set with cardinality greater than one, a contradiction. 
        
        Next we will show that the sets $B, C$ are independent dominating sets and therefore maximal independent sets. That they are independent it is clear from the construction of the graph $L(n,m)$. The set
        $B$ is dominating because every element of $B^{\complement}$ is adjacent to an element of $B$. In the same way we conclude that the set $C$ is dominating.

        Note that $|B|=n-k-p_k+1$ and $|C|=n-k$. If $p_k>1$, the set $C$ is the only maximum independent set of the graph $L(n,m)$. If $p_k=1$ then $|B|=|C|$ and the graph has two maximum independent sets, $B$ and $C$.
\end{proof}

Since the lex graph $L(n,m)$ maximizes the number of independent sets of fixed size for $G(n,m)$ then $n-k$ is a sharp upper bound for the independence number $\alpha$ of $G(n,m)$-graphs. Therefore by combining Remark \ref{solution} and Theorem \ref{bounds} we have the following theorem. This result was found first by Hansen and Zheng \cite{hansen1993sharp}. We present a new and simple proof here.

    \begin{theorem}
    Let $G$ be a graph with $n$ vertices and $m$ edges and with independence number $\alpha$ then 
    \begin{equation}
        \alpha\leq \left\lfloor \frac{1}{2} +\sqrt{\frac{1}{4}+n^2-n-2m}\right\rfloor
    \end{equation}
    and the bound is sharp.
    \end{theorem}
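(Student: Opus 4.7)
The plan is to reduce the claim to what has already been established for Lex graphs. First, I would observe that since $L(n,m)$ maximizes the number $i_r$ of independent sets of every fixed size $r$ over all graphs of order $n$ and size $m$, it also maximizes the independence number over this class: whenever a $G(n,m)$ graph $G$ has an independent set of size $r$, the inequality $i_r(L(n,m))\ge i_r(G)\ge 1$ forces $L(n,m)$ to contain one too, so $\alpha(G)\le \alpha(L(n,m))$. It therefore suffices to compute $\alpha(L(n,m))$.

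Second, I would apply Theorem \ref{bounds} to obtain $\alpha(L(n,m))=n-k$, where $k$ is the depth of $sds(m,n)$, and substitute the closed form \eqref{secondsolution} from Remark \ref{solution} to get
\begin{equation*}
    \alpha(L(n,m)) \;=\; n-\left\lceil n-\tfrac{1}{2}-\sqrt{\tfrac{1}{4}+n^2-n-2m}\,\right\rceil.
\end{equation*}

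The last step is a one-line arithmetic identity: for any integer $n$ and any real $y$, one has $n-\lceil n-y\rceil=\lfloor y\rfloor$, a direct consequence of $\lceil -x\rceil=-\lfloor x\rfloor$. Applying this with $y=\tfrac{1}{2}+\sqrt{\tfrac{1}{4}+n^2-n-2m}$ converts the display above into exactly the asserted bound on $\alpha$. Sharpness then comes for free, since $L(n,m)$ itself realises equality through the chain of equalities just used.

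I do not foresee any serious obstacle; the only points requiring care are the transfer from the ``maximizer of $i_r$ for every $r$'' property of $L(n,m)$ to ``maximizer of $\alpha$'', which is the short observation in the first paragraph, and the degenerate case $m=0$, for which $sds(m,n)$ is undefined but both $\alpha$ and the formula evaluate trivially to $n$.
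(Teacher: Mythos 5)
Your proposal is correct and follows essentially the same route as the paper: reduce to the lex graph via its extremality, apply Theorem \ref{bounds} together with the closed form for $k$ from Remark \ref{solution}, and convert $n-\left\lceil n-y\right\rceil$ into $\left\lfloor y\right\rfloor$. The only cosmetic differences are that you obtain this last identity from $\lceil -x\rceil=-\lfloor x\rfloor$ while the paper sandwiches $n-\lceil n-y\rceil$ strictly between $y-1$ and $y$ and invokes the definition of the floor, and that you spell out the transfer from ``maximizer of $i_r$ for every $r$'' to ``maximizer of $\alpha$'' and the degenerate case $m=0$, which the paper leaves implicit.
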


\begin{proof}
     Note that for real numbers $a$ and $b$ we have: $a-(b+1)<a-\lceil b\rceil\leq a-b$. This imples that
    \begin{align}
        -\frac{1}{2} +\sqrt{\frac{1}{4}+n^2-n-2m}&< n -\left\lceil n - \frac{1}{2} -\sqrt{\frac{1}{4}+n^2-n-2m}\right\rceil\\[+3mm]
        &\leq  \frac{1}{2} +\sqrt{\frac{1}{4}+n^2-n-2m}
    \end{align}
    and therefore by the definition of the floor function
    
    \begin{equation}
        n -\left\lceil n - \frac{1}{2} -\sqrt{\frac{1}{4}+n^2-n-2m}\right\rceil=\left\lfloor \frac{1}{2} +\sqrt{\frac{1}{4}+n^2-n-2m}\right\rfloor.
    \end{equation}
    
    By Theorem \ref{bounds} we have that $\alpha \leq n-k$. By inserting the solution for $k$ from  Remark \ref{solution} we have:
    
    \begin{align*}
        \alpha 
        &\leq n -\left\lceil n - \frac{1}{2} -\sqrt{\frac{1}{4}+n^2-n-2m}\right\rceil\\[+3mm]
        &= \left\lfloor \frac{1}{2} +\sqrt{\frac{1}{4}+n^2-n-2m}\right\rfloor.
    \end{align*}
\end{proof}

\begin{theorem}\label{fixedcount}
    Let $L(n,m)$ be the lex graph with $n$ vertices, $m$ edges and depth $k$. Then the number of independent sets of size $r>2$ in $L(n,m)$, denoted $i_r$ is
        $$\binom{n-k-p_k}{r-1}+\binom{n-k}{r}.$$
\end{theorem}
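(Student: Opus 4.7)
The plan is to exploit the structural classification of independent sets already established in the proof of Theorem \ref{bounds}. Recall that in that proof, it was shown that every independent set of cardinality greater than one is contained in $B = \{k\} \cup \{k+p_k+1,\ldots,n\}$ or in $C = \{k+1,\ldots,n\}$. Since $r>2$ in particular gives size $>1$, this structural fact applies to every independent set we wish to count.

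The key observation I would use is that $B$ and $C$ overlap in exactly $\{k+p_k+1,\ldots,n\}$, and $B$ differs from $C$ only by the presence of vertex $k$. So I would partition the independent sets of size $r$ according to whether they contain $k$ or not. If $k \notin I$, then $I$ is not a subset of $B \setminus C^{\complement}$ in a way involving $k$; more simply, $I$ is contained in $B\setminus\{k\}$ or in $C$, and $B\setminus\{k\} \subseteq C$, so in either case $I\subseteq C$. Since $|C|=n-k$ and $C$ is independent, such sets contribute exactly $\binom{n-k}{r}$. If $k\in I$, then necessarily $I\subseteq B$ (because $k\notin C$), and since $B$ is independent with $|B|=n-k-p_k+1$, the remaining $r-1$ vertices are chosen freely from $B\setminus\{k\}=\{k+p_k+1,\ldots,n\}$, which has size $n-k-p_k$. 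This contributes $\binom{n-k-p_k}{r-1}$.

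Adding the two disjoint counts yields the desired formula. I do not foresee a serious obstacle: the independence of $B$ and $C$ is already recorded in Theorem \ref{bounds}, the classification of independent sets of size $>1$ is done there as well, and the partition by the vertex $k$ is clean because $B\setminus C=\{k\}$. The only minor care needed is to verify that the case $k\in I$ really forces $I\subseteq B$ rather than $I\subseteq C$, which is immediate since $k\notin C$; and to note that in the corner case $p_k=1$ the formulas still give the correct count (here $B$ and $C$ have equal cardinality but remain distinct, and the vertex-$k$ partition still keeps the two contributions disjoint).
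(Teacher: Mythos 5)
Your proof is correct and takes essentially the same approach as the paper's: the paper likewise partitions the independent sets of size $r$ into those containing the vertex $k$, counted by $\binom{n-k-p_k}{r-1}$, and those avoiding $k$, counted by $\binom{n-k}{r}$. If anything, your write-up is more complete, since the paper asserts these two counts without justification, whereas you derive them from the classification of independent sets of size greater than one (as subsets of $B$ or of $C$) established in the proof of Theorem \ref{bounds}.
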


\begin{proof}
    There are two types of independent sets of size $r$:
    \begin{enumerate}
        \item Independent sets which contain the vertex $k$. There are 
        \begin{equation}
            \binom{n-k-p_k}{r-1}
        \end{equation}
        such sets.
        \item Independent sets which do not contain the element $k$. There are 
        \begin{equation}
            \binom{n-k}{r}
        \end{equation}
        such sets.
    \end{enumerate}
\end{proof}

By using the extremality of $L(n,m)$, Theorem \ref{fixedcount} and Remark \ref{solution}  we have the following corollary which gives a sharp upper bound for the number of independent sets of fixed size.

    \begin{corollary}\label{corollary}
    Let $G(n,m)$ be a graph  with $n$ vertices and $m$ edges and let $i_r$ be its number of independent sets of size $r$ then
        \begin{equation}\label{lexbound}
          i_r\leq  \binom{n-k-p_k}{r-1}+\binom{n-k}{r}
        \end{equation}
        where $k$ and $p_k$ are as in Remark \ref{solution}
        and the bound is sharp.
    \end{corollary}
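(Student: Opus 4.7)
The proof is essentially an assembly of three previously established facts, so my plan is simply to chain them together.

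First, I would invoke the extremality result of Cutler and Radcliffe (\cite{cutler2011extremal}) cited just before Lemma \ref{exp1lemma}: among all graphs with $n$ vertices and $m$ edges, the lex graph $L(n,m)$ maximizes the number of independent sets of every fixed size. In particular, for any $G(n,m)$ graph $G$,
$$
i_r(G)\ \leq\ i_r(L(n,m)).
$$
This is the only place where the extremal property is used, and no further argument about the structure of $G$ is required.

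Second, I would apply Theorem \ref{fixedcount}, which gives the exact count of independent sets of size $r>2$ in $L(n,m)$:
$$
i_r(L(n,m)) = \binom{n-k-p_k}{r-1} + \binom{n-k}{r},
$$
where $k$ is the depth of $L(n,m)$ and $p_k$ is the last term of its $sds(m,n)$. Combined with the previous step, this already yields the stated inequality.

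Third, to identify $k$ and $p_k$ with the quantities in Remark \ref{solution}, I would note that by the definition given just after Definition \ref{sds}, the depth of the lex graph $L(n,m)$ is precisely the depth of $sds(m,n)$, whose explicit expression is recorded in Remark \ref{solution}; the value of $p_k$ is given there as well. Hence the parameters appearing in Theorem \ref{fixedcount} coincide with the $k$ and $p_k$ of the corollary's statement.

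Finally, sharpness is immediate: $L(n,m)$ is itself a graph with $n$ vertices and $m$ edges, and by Theorem \ref{fixedcount} it attains equality in \eqref{lexbound}. I do not anticipate any real obstacle here, since the work has already been done in the preceding theorems; the only thing to be careful about is making sure that the ``depth of $L(n,m)$'' from Theorem \ref{bounds}/\ref{fixedcount} is unambiguously the same $k$ that appears in Remark \ref{solution}, which follows directly from the definitions.
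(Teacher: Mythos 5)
Your proposal is correct and follows exactly the paper's own route: the paper derives this corollary by combining the extremality of $L(n,m)$ from \cite{cutler2011extremal}, the exact count in Theorem \ref{fixedcount}, and the identification of $k$ and $p_k$ via Remark \ref{solution}, with sharpness witnessed by $L(n,m)$ itself. Your chaining of these three facts, including the observation that the depth of $L(n,m)$ is by definition the depth of $sds(m,n)$, matches the intended argument.
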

\

\begin{remark}
When $m=\sum\limits_{i=1}^kn-i$ then $n-k-p_k=0$ and the Inequality \eqref{lexbound} becomes
\begin{equation}
    i_r\leq \binom{n-k}{r}= \binom{n-k-1}{r}+ \binom{n-k-1}{r-1}.
\end{equation}
\end{remark}

Note that 
\begin{enumerate}
    \item $m=\sum\limits_{i=1}^kn-i \Longleftrightarrow \binom{n}{2}-m=\binom{s}{2}+s$ and
    \item $\left\{\sqrt{\frac{1}{4}+n^2-n-2m}\right\}=\frac{1}{2} \Longleftrightarrow \binom{n}{2}-m=\binom{s}{2}+s$ 
\end{enumerate}
where $k$ and $s$ are some natural numbers. By making use of these observations and Lemma \ref{lemma} we can see that Theorem \ref{indbound} and Theorem \ref{sproperty} are consequences of Corollary \ref{corollary}.

\printbibliography
\end{document}